\newtheorem{guia}{}
\newtheorem{teorema}[guia]{Theorem}
\newtheorem{lema}[guia]{Lemma}
\newcommand{\al}{\alpha}
\newcommand{\De}{\Delta}
\newcommand{\de}{\delta}
\newcommand{\ds}{\displaystyle}
\newcommand{\e}{\varepsilon}
\newcommand{\g}{\gamma}
\newcommand{\G}{\Gamma}
\newcommand{\la}{\lambda}
\newcommand{\La}{\Lambda}
\newcommand{\N}{\mathbb N}
\newcommand{\Om}{\Omega}
\newcommand{\Omb}{\overline{\Omega}}
\newcommand{\p}{\partial}
\newcommand{\R}{\mathbb R}
\begin{document}

\title[Liouville theorem for indefinite fractional diffusion]{A Liouville theorem for indefinite 
fractional diffusion equations and\\ its application to existence of solutions}

\author[B. Barrios, L. Del Pezzo, J. Garc\'{\i}a-Meli\'{a}n and A. Quaas]
{B. Barrios, L. del Pezzo, J. Garc\'{\i}a-Meli\'{a}n\\ and A. Quaas}

\date{}

\address{B. Barrios \hfill\break\indent
Departamento de An\'{a}lisis Matem\'{a}tico, Universidad de La Laguna
\hfill \break \indent C/. Astrof\'{\i}sico Francisco S\'{a}nchez s/n, 38200 -- La Laguna, SPAIN}
\email{{\tt bbarrios@ull.es}}

\address{L. Del Pezzo \hfill\break\indent
CONICET  \hfill\break\indent
Departamento de Matem\'atica y Estad\'istica
\hfill\break\indent Universidad Torcuato Di Tella
\hfill\break\indent Av. Figueroa Alcorta 7350 (C1428BCW)
\hfill\break\indent C. A. de Buenos Aires,
ARGENTINA. }
\email{{\tt ldelpezzo@utdt.edu}}

\address{J. Garc\'{\i}a-Meli\'{a}n \hfill\break\indent
Departamento de An\'{a}lisis Matem\'{a}tico, Universidad de La Laguna
\hfill \break \indent C/. Astrof\'{\i}sico Francisco S\'{a}nchez s/n, 38200 -- La Laguna, SPAIN
\hfill\break\indent
{\rm and} \hfill\break
\indent Instituto Universitario de Estudios Avanzados (IUdEA) en F\'{\i}sica
At\'omica,\hfill\break\indent Molecular y Fot\'onica,
Universidad de La Laguna\hfill\break\indent C/. Astrof\'{\i}sico Francisco
S\'{a}nchez s/n, 38200 -- La Laguna, SPAIN.}
\email{{\tt jjgarmel@ull.es}}

\address{A. Quaas\hfill\break\indent
Departamento de Matem\'{a}tica, Universidad T\'ecnica Federico Santa Mar\'{\i}a
\hfill\break\indent  Casilla V-110, Avda. Espa\~na, 1680 --
Valpara\'{\i}so, CHILE.}
\email{{\tt alexander.quaas@usm.cl}}


\begin{abstract}
In this work we obtain a Liouville theorem for positive, bounded solutions of the 
equation 
$$
(-\De)^s u= h(x_N)f(u) \quad \hbox{in }\mathbb{R}^{N}
$$
where $(-\De)^s$ stands for the fractional Laplacian with $s\in (0,1)$, and the 
functions $h$ and $f$ are nondecreasing. The main feature is that the function 
$h$ changes sign in $\R$, therefore the problem is sometimes termed as indefinite. 
As an application we obtain a priori bounds for positive solutions of some 
boundary value problems, which give existence of such solutions by means of 
bifurcation methods.
\end{abstract}

\maketitle

\section{Introduction and main results}
\setcounter{section}{1}
\setcounter{equation}{0}

The objective of the present paper is to obtain a Liouville theorem for a nonlocal elliptic 
equation involving the fractional Laplacian. This operator is defined for sufficiently smooth functions
by
$$
(-\Delta)^s u(x) = c(N,s) \int_{ \mathbb{R}^N} \frac{u(x)-u(y)}{|x-y|^{N+2s}} dy,
$$
where $0<s<1$, $c(N,s)$ is a normalization constant whose value will be of no importance for us and 
the integral is to be understood in the principal value sense.

During the last years there has been an increasing amount of research on equations driven by $(-\De)^s$. 
The main interest is to test whether the known features for its local counterpart $-\De$, obtained by setting $s=1$, 
remain valid for arbitrary $s\in (0,1)$. In general, this has led to adaptation of the standard 
techniques and to the search for new tools. Moreover, sometimes even stronger results can be obtained 
in the nonlocal case as our main theorem below shows.

\medskip

When it comes to Liouville theorems there is a more or less satisfactory understanding of the 
equation 
\begin{equation}\label{eq-intro-1}
(-\De)^s u = f(u) \quad \hbox{in }\R^N.
\end{equation}
The case $f(t)=t^p$ was considered in \cite{CLO1, ZCCY}, while in \cite{CLZ} some more 
general nonlinearities were analyzed. In these works, the authors obtained for arbitrary $s\in (0,1)$ 
an analogue of the previously proved results in $s=1$ (cf. \cite{GS1,CL,B,LZ}).

However, the situation is fairly different when \eqref{eq-intro-1} is replaced by a \emph{non-autonomous} 
equation. Let us mention the papers \cite{DZ}, \cite{FaW}, which deal with H\'enon equation
$$
(-\De)^s u= |x|^\alpha u^p \quad \hbox{in } \R^N,
$$
where $\al>0$ and $p>1$. As for equations which involve weights with change sign, only the paper 
\cite{CZ} is known to us. There, the authors consider the equation 
\begin{equation}\label{eq-intro-2}
(-\De)^s u= x_N u^p \quad \hbox{in } \R^N,
\end{equation}
and show that there do not exist positive, bounded solutions for any $p>1$, provided that $s\ge \frac{1}{2}$. 
This result is subsequently used to obtain a priori bounds for solutions of some related boundary 
value problems (see \eqref{bvp} below). The main technique in \cite{CZ} is the reduction of the problem to a local one 
by means of the extension problem introduced in \cite{CS3}. The notation in \eqref{eq-intro-2} is the usual 
one: for a point $x\in \R^N$ we write $x=(x',x_N)$, where $x'\in \R^{N-1}$ and $x_N\in \R$.

The local version of problems related to \eqref{eq-intro-2} has been also considered for instance in the works 
\cite{BCDN} and \cite{L}, but in our opinion perhaps the most general result in this regard 
was obtained in \cite{Du-Li}, where the problem 
$$
-\De u = h(x_N) f(u) \quad \hbox{in } \R^N
$$
is studied. Here $h$ and $f$ are nondecreasing functions which verify some additional conditions, 
the main feature being that the function $h$ is assumed to be nonpositive for $x_N<0$ and positive 
for $x_N>0$. As for the nonlinearity $f$, the natural example is $f(t)=t^p$, with $p>1$.
 
Our intention in this work is to obtain a similar Liouville theorem for the problem
\begin{equation}\label{problema}
(-\De)^s u= h(x_N)f(u) \quad \hbox{in }\mathbb{R}^{N},
\end{equation}
where both $h$ and $f$ are monotone and $h$ is allowed to change sign. We state below our 
precise hypotheses on $h$ and $f$, but it is interesting to remark that they are less 
stringent than in the case $s=1$ considered in \cite{Du-Li}.

\medskip

On the functions $h$ and $f$ we will assume the following, which will be termed altogether 
as hypotheses (H):

\smallskip

\begin{itemize}
\item[(H1)] $h\in C^\al(\R^N)$ for some $\al\in (0,1)$.
\item[(H2)] $h$ is nondecreasing in $\R$, with $h(0)=0$ and $h(t)>0$ for $t>0$. 
\item[(H3)] $\ds \lim_{t\to +\infty} h(t)=+\infty$.
\item[(H4)] $f$ is locally Lipschitz and nondecreasing in $[0,+\infty)$, with $f(0)=0$ and $f>0$ 
in $(0,+\infty)$. \
\item[(H5)] $\ds \lim_{r, t\to 0}\frac{f(r)-f(t)}{r-t}=0$.
\end{itemize}

\smallskip

\noindent Observe that condition (H2) could be stated with respect to another point different from zero, 
but this amounts only to a change of variables in \eqref{problema}. Natural examples for functions 
$h$ and $f$ are $h(t)=|t|^{\al-1} t$ for some $\al>0$ and $f(t)=t^p$ for $p>1$. The case $\al=1$ 
then leads to \eqref{eq-intro-2}. 

Let us also mention here that, if $f\in C^1$ near the origin then condition (H5) is equivalent to $f'(0)=0$. 
Nevertheless, the case $f'(0)>0$ could also be included in our main theorem  by arguing as in \cite[Pag 13]{BDGQ2}. 
However, since the main application of the Liouville theorem presented in this work is concerned with 
the existence result established in Theorem \ref{th-bvp} below, we have considered that this case could 
be omitted.

We now come to the statement of our Liouville theorem for \eqref{problema}. We will be dealing 
throughout with classical solutions, that is, functions $u\in C^{2s+\beta}(\R^N)$ for 
some $\beta\in (0,1)$, verifying \eqref{problema} at every point of $\R^N$. However, it is to be 
noted that by bootstrapping and the regularity theory developed in \cite{CS} and \cite{S}, solutions in the 
viscosity sense turn out to be classical.

\begin{teorema}\label{th-Liouville}
Assume $h$ and $f$ verify hypotheses {\rm (H)}. Then problem \eqref{problema} does not admit any 
positive, bounded solution.
\end{teorema}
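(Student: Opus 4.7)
My strategy is the classical two-step one: first prove that any positive bounded solution $u$ is monotone non-decreasing in the $x_N$-direction by a sliding argument; then pass to the limit as $x_N\to+\infty$ and contradict the unbounded growth of $h$ from (H3).

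For Step~1, set $u_\tau(x)=u(x',x_N+\tau)$ and $w_\tau=u_\tau-u$ for $\tau>0$. A direct computation yields
\begin{equation*}
(-\De)^s w_\tau + d_\tau(x)\,w_\tau = \bigl[h(x_N+\tau)-h(x_N)\bigr]\,f(u),
\end{equation*}
with $d_\tau(x)=-h(x_N+\tau)\,\dfrac{f(u_\tau)-f(u)}{u_\tau-u}$, extended by $0$ where $u_\tau=u$ and locally bounded by (H4). The right-hand side is nonnegative by (H2) and (H4), while $d_\tau$ has the favourable sign ($d_\tau\ge 0$) only on $\{x_N\le-\tau\}$. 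I would then run a sliding in $\tau$: define $\tau_0=\inf\{\tau>0:w_\sigma\ge0 \text{ in } \R^N \text{ for every }\sigma\ge\tau\}$ and aim for $\tau_0=0$. Hypothesis (H5) is essential here, for it forces the quotient appearing in $d_\tau$ to be uniformly arbitrarily small whenever $u$ and $u_\tau$ are both small, giving access to a small-coefficient maximum principle in the regions where $d_\tau$ has the wrong sign. At a critical $\tau_0>0$ one takes a sequence $\{x^k\}$ with $w_{\tau_0}(x^k)\to 0$; translating in the $x'$-directions preserves the equation and, together with the uniform H\"older estimates from \cite{CS,S}, allows the extraction of a limit function to which the strong maximum principle for $(-\De)^s$ (which is global in the nonlocal setting since the kernel is positive everywhere) can be applied. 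The main difficulty is to rule out the drift of the $x_N$-coordinate of $x^k$ to $\pm\infty$: drift to $+\infty$ is excluded by the limiting argument of Step~3 below applied to translates in both variables, while drift to $-\infty$ is controlled by combining the fractional subharmonicity of $u$ in $\{x_N<0\}$ (which holds since $h\le0$ there) with the small-value regime granted by (H5).

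Once monotonicity in $x_N$ is secured, boundedness gives the existence of $v(x'):=\lim_{x_N\to+\infty} u(x',x_N)$ for every $x'\in\R^{N-1}$, and monotonicity together with positivity of $u$ yields $v(x')\ge u(x',x_N^0)>0$ for any fixed $x_N^0$, so $v>0$ everywhere. To reach the final contradiction, consider the translates $u_k(x)=u(x+ke_N)$, which satisfy $(-\De)^s u_k=h(x_N+k)\,f(u_k)$ and converge pointwise and boundedly to $v$. Pairing with any nonnegative $\varphi\in C_c^\infty(\R^N)$ with $\varphi\not\equiv 0$,
\begin{equation*}
\int_{\R^N} u_k(x)\,(-\De)^s\varphi(x)\,dx = \int_{\R^N} h(x_N+k)\,f(u_k(x))\,\varphi(x)\,dx,
\end{equation*}
the left-hand side converges by dominated convergence to the finite quantity $\int v\,(-\De)^s\varphi$ (since $u_k$ is uniformly bounded and $(-\De)^s\varphi\in L^1(\R^N)$). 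On the right-hand side, however, for each $x\in\operatorname{supp}\varphi$ one has $h(x_N+k)\to+\infty$ by (H3) and $f(u_k(x))\to f(v(x'))>0$ by (H4) and the positivity of $v$, so Fatou's lemma forces the right-hand side to $+\infty$: contradiction.

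The principal obstacle is clearly Step~1. The absence of decay of $u$ at infinity and the indefinite sign of $d_\tau$ on the upper half-space prevent a routine application of the sliding method, and a careful combination of hypothesis (H5) with translation-compactness arguments (to handle the potentially non-compact minimizing configurations at the critical $\tau_0$) is required. In contrast, Steps~2 and~3 reduce to standard convergence arguments once the monotonicity is in hand.
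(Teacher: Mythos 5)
Your overall architecture (monotonicity in $x_N$, then a contradiction using $h(t)\to+\infty$) matches the paper's, and your final step is in fact a correct alternative ending: the paper concludes with a principal-eigenvalue comparison on translated balls $B_1(ke_N)$, while your pairing with a test function and Fatou's lemma on the translates $u(\cdot+ke_N)$ would serve equally well once monotonicity is known. The problem is that your Step 1, which you yourself identify as the heart of the matter, is not a proof but a programme, and the specific mechanism you propose does not obviously work. First, the sliding method as you set it up cannot be started: the set $\{\tau>0:\ w_\sigma\ge 0\hbox{ in }\R^N\hbox{ for all }\sigma\ge\tau\}$ over which you take the infimum $\tau_0$ has no reason to be nonempty, since $u$ has no decay, no limits as $x_N\to\pm\infty$ are prescribed, and there is no bounded direction; classical sliding arguments always rely on such structure to get the initial range of $\tau$. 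Second, the ``small-coefficient maximum principle'' you invoke in the region where $d_\tau$ has the wrong sign is not available: (H5) only makes the difference quotient small where $u$ and $u_\tau$ are \emph{both small}, and there is no a priori smallness of $u$ for $x_N$ large; on the contrary, the factor $h(x_N+\tau)\to+\infty$ makes the negative part of $d_\tau$ unbounded there, so no narrow-region or small-measure argument applies on the upper half-space. Third, the drift of the touching points $x^k$ to $\pm\infty$ is exactly the compactness obstruction, and your proposed fixes (``apply Step 3 to translates'', ``fractional subharmonicity in $\{x_N<0\}$ combined with (H5)'') are not arguments; this is where the real work lies.

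The paper resolves precisely these difficulties by using reflections rather than translations, and the difference is structural, not cosmetic. With $w_\la=u(x',2\la-x_N)-u$, the comparison takes place on the half-space $\Sigma_\la=\{x_N<\la\}$ and $w_\la$ is antisymmetric across $T_\la$; this antisymmetry is what makes the truncation $v_\la=w_\la\chi_{D_\la}$ a subsolution (Lemma 5 of \cite{BDGQ2}), what allows the maximum principle for open sets contained in a half-space to start the procedure at $\la\le 0$ (where $h(x_N)\le0$ on $\Sigma_\la$ gives the good sign), and, crucially, what permits the representation of $\widetilde v_n$ through the Green's function of the half-space $\Sigma_{\la_n}$ (via \cite{FW}), leading to the integral inequality $\frac12\le\int G_n(z_n,y)a_n(y)\,dy$. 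The continuation past a finite critical level $\la_*$ is then killed by Green's function estimates (Lemma 7 of \cite{BDGQ2}) in two regimes: if the translated solutions stay positive, a symmetry-versus-$h$ argument forces the touching points to approach $T_{\la_*}$, where the Green's mass vanishes; if they degenerate to zero, (H5) makes the coefficients $a_n$ vanish locally uniformly. None of this half-space/Green's function machinery transfers to the translation setting you chose, because $w_\tau$ has no antisymmetry and no natural half-space attached to it. So as written the proposal has a genuine gap at its central step; to repair it you would essentially have to abandon sliding for the moving-plane/antisymmetric framework (or supply an entirely new compactness mechanism that you have not indicated).
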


\medskip

A natural application of this Liouville theorem arises when considering boundary value problems with 
indefinite weights, for instance
\begin{equation}\label{bvp}
\left\{
\begin{array}{ll}
(-\De)^s u = \la u + a(x) u^p & \hbox{in } \Om,\\
\ \ u=0 & \hbox{in }  \R^N \setminus \Om,
\end{array}
\right.
\end{equation}
where $a\in C^\al(\Omb)$ for some $\al\in (0,1)$. Here $p>1$, $\la\in \R$ is a parameter and 
$a$ is assumed to change sign in a ``controlled" way. The local case $s=1$ has been extensively 
studied, to mention a few, in \cite{AT1, AT2, ALG, BCDN, BCDN2, BZ, CL2, CL3, Du-Li} 
(see more references in \cite{Du-Li}). 

As for the fractional case $s\in (0,1)$, we refer to \cite{GS} and \cite{FL}, where variational 
techniques were used. The use of variational techniques allows for somewhat relaxed hypotheses, however 
they only give existence of positive solutions of \eqref{bvp} for positive values of $\la$. On the 
other hand, the approach which we follow here, based on a priori bounds and bifurcation theory, 
is suitable for generalization to a nonvariational setting. Indeed, the a priori bounds can be 
obtained as in \cite{BDGQ1}, while the application of bifurcation theory requires only minor technical 
adjustments (which however go beyond the scope of this work).

In the present situation, we will be assuming that $a$ verifies the structural 
conditions, termed henceforth as hypotheses (A):

\smallskip

\begin{itemize}

\item[(A1)] The set $\Gamma:=\{x\in \Omb:\ a(x)=0\}$ is a smooth manifold of dimension $N-1$ 
contained in $\Om$. 

\item[(A2)] There exist $\g>0$ and positive, continuous functions $b_1$, $b_2$ defined in $\Om$ such 
that in a neighborhood of $\Gamma$
$$
a(x) = \left\{
\begin{array}{ll}
b_1(x) d(x)^\g & x\in \Om^+\\[0.25pc]
-b_2(x) d(x)^\g & x\in \Om^-
\end{array}
\right.
$$
where $d(x):=\hbox{dist}(x,\Gamma)$, $\Om^+:=\{x\in\Om: \ a(x)>0\}$, 
$\Om^-:=\{x\in\Om: \ a(x)<0\}$.
\end{itemize}

\smallskip

Observe that hypotheses (A) imply that the set $\Gamma=\{a=0\}$ is contained in $\Om$ and has empty interior. 
Moreover, $a$ has different signs on ``both sides" of $\Gamma$. This is equivalent to saying that $\overline{\Om^+} 
\cap \overline{\Om^-}=\G$ (see Figure 1).

\smallskip

\psfrag{O}{$\Om$}
\psfrag{O+}{$\Om^+$}
\psfrag{O-}{$\Om^-$}
\psfrag{G}{$\Gamma$}

\begin{center}
	\includegraphics[width=4.5cm,height=3cm]{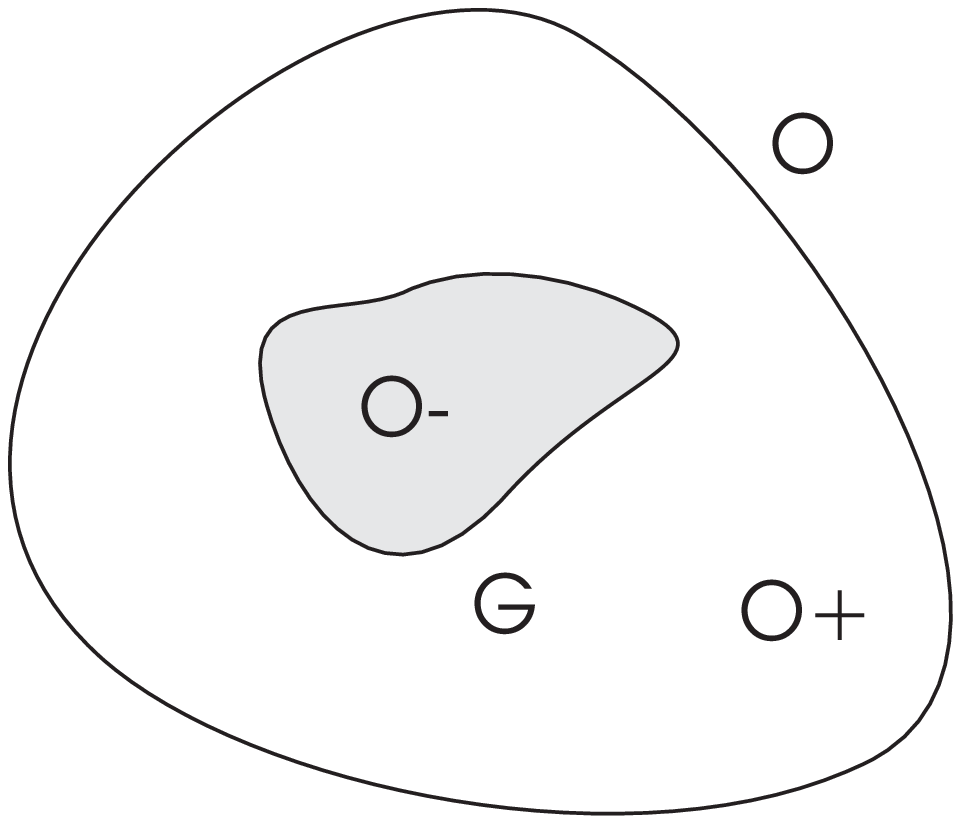}

\medskip
{\sc Figure 1.} A possible configuration for $\Om^+$

and $\Om^-$ in hypotheses (A).
\end{center}

\bigskip

With these assumptions on the weight $a$, and assuming in addition that $p$ is subcritical, 
we can obtain a priori bounds for all positive solutions of \eqref{bvp} in bounded $\la$-intervals. 
And with the aid of bifurcation theory, these a priori bounds lead to an existence result. We 
denote by $\la_1(\Om)$ the first eigenvalue of $(-\De)^s$ in $\Om$.

\begin{teorema}\label{th-bvp}
Assume $s\in (0,1)$, $p>1$ and let $a\in C^\al(\Omb)$ verify hypotheses {\rm (A)}. If $N\le 2s$ or $N>2s$ and 
$p$ is such that
$$
p<\frac{N+2s}{N-2s},
$$
then problem \eqref{bvp} admits at least a positive classical solution for every $\la <\la_1(\Om)$. 
Moreover, there exists $\La\ge \la_1(\Om)$ such that there are no such solutions 
if $\la >\La$. 
\end{teorema}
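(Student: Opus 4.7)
The plan is to combine a priori $L^\infty$ bounds for positive classical solutions of \eqref{bvp} on compact $\lambda$-intervals with a Rabinowitz-type global bifurcation argument, in the spirit of the local case treated in \cite{Du-Li} but using fractional Liouville theorems as the key rigidity input.

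\textbf{Step 1 (a priori bounds via blow-up).} I would first show that for every compact interval $I \subset \R$ there is $C(I)>0$ such that every positive classical solution $u$ of \eqref{bvp} with $\lambda \in I$ satisfies $\|u\|_{\infty}\le C(I)$. Arguing by contradiction, suppose $(u_n,\lambda_n)$ is a sequence of positive solutions with $\lambda_n\in I$ and $M_n:=u_n(x_n)=\|u_n\|_\infty\to\infty$, and set $v_n(y):=M_n^{-1}u_n(x_n+\mu_n y)$ with $\mu_n\to 0$ chosen appropriately. Up to a subsequence $x_n\to x_0\in\Omb$ and three regimes arise. (a) If $x_0\in\Om\setminus\G$, the choice $\mu_n=M_n^{-(p-1)/(2s)}$ yields in the limit (using the fractional regularity theory of \cite{CS,S}) a positive bounded solution of $(-\De)^s v= \pm v^p$ in $\R^N$ or in a half-space with zero exterior data, which is ruled out by \cite{CLO1,ZCCY} (subcriticality of $p$) or by the corresponding half-space Liouville theorem. (b) If $x_0\in\p\Om\setminus\G$, the same half-space Liouville theorem applies. (c) If $x_0\in\G$, the weight degenerates, and hypothesis (A2) suggests the corrected scale $\mu_n=M_n^{-(p-1)/(2s+\g)}$. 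Straightening $\G$ locally to $\{y_N=0\}$, the rescaled right-hand side converges to
\begin{equation*}
h(y_N)v^p,\qquad h(t)=b_1(x_0)t^\g \text{ for }t>0,\quad h(t)=-b_2(x_0)|t|^\g\text{ for }t<0,
\end{equation*}
and $h$ satisfies (H1)--(H3) while $f(v)=v^p$ with $p>1$ satisfies (H4)--(H5). Theorem \ref{th-Liouville} then yields $v\equiv 0$, contradicting the normalization $v_n(0)=1$ preserved in the limit. The main obstacle is the bookkeeping in case (c): one must split into subcases according to the rates $d(x_n,\G)/\mu_n$ and $d(x_n,\p\Om)/\mu_n$ so as to select the correct scaling and extract the proper limit problem in each subcase.

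\textbf{Step 2 (global bifurcation).} I would reformulate \eqref{bvp} as a fixed-point equation $u=\mathcal{T}(\la,u):=(-\De)_D^{-s}\!\left(\la u + a(x)(u^+)^p\right)$ on $X:=\{u\in C(\overline{\R^N}):\ u\equiv 0\ \text{on }\R^N\setminus\Om\}$, where $(-\De)_D^{-s}$ is the solution operator of the Dirichlet fractional Laplacian on $\Om$. By fractional Schauder estimates, $\mathcal{T}$ is compact, and since $p>1$ the nonlinearity is of higher order at $u=0$, so the only bifurcation point from the trivial branch of positive solutions is $(\la_1(\Om),0)$. Rabinowitz's theorem yields an unbounded connected component $\mathcal{C}\subset\R\times X$ of positive solutions emanating from $(\la_1(\Om),0)$; positivity is propagated along $\mathcal{C}$ via the strong maximum principle and the fact that $\mathcal{C}$ cannot meet the trivial branch away from $\la_1(\Om)$. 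The a priori bound from Step 1 shows that $\mathcal{C}$ is bounded in $u$ on each compact $\la$-interval, hence $\mathcal{C}$ is unbounded in the $\la$-direction.

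\textbf{Step 3 (nonexistence for large $\la$ and conclusion).} To pin down the direction in which $\mathcal{C}$ escapes, I would establish nonexistence of positive solutions for $\la>\La$ sufficiently large. This follows by testing \eqref{bvp} against the first positive eigenfunction $\varphi_1$ of $(-\De)^s$ on $\Om$, which gives
\begin{equation*}
(\la_1(\Om)-\la)\int_\Om u\varphi_1 \,dx=\int_\Om a(x)u^p\varphi_1\,dx,
\end{equation*}
and then exploiting the sign of $a$ on $\Om^-$ together with a principal-eigenvalue comparison on a subdomain where $a\le 0$, exactly as in the local setting of \cite{Du-Li}. This excludes $\la\to+\infty$ along $\mathcal{C}$, so that $\mathcal{C}$ is unbounded towards $\la\to-\infty$. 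By connectedness, $\mathcal{C}$ then projects onto $(-\infty,\la_1(\Om))$ and supplies a positive classical solution for every such value of $\la$, while the nonexistence threshold $\La\ge\la_1(\Om)$ from the test-function computation gives the second assertion of the theorem.
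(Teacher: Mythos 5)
Your overall strategy coincides with the paper's: blow-up a priori bounds on compact $\la$-intervals (with the indefinite Liouville theorem, Theorem \ref{th-Liouville}, handling the degenerate case $x_0\in\G$ at the scale $M_k^{-\frac{p-1}{2s+\g}}$), a Rabinowitz continuum of positive solutions emanating from $(\la_1(\Om),0)$, and the combination ``unbounded continuum $+$ $\la$ bounded above $+$ a priori bounds on compact $\la$-intervals'' to reach every $\la<\la_1(\Om)$. The one step that would fail as written is your nonexistence argument in Step 3. The eigenfunction identity $(\la_1(\Om)-\la)\int_\Om u\varphi_1=\int_\Om a u^p\varphi_1$ gives nothing by itself, since the right-hand side changes sign, and a principal-eigenvalue comparison ``on a subdomain where $a\le 0$'' points in the wrong direction: on $\Om^-$ one only gets the subsolution inequality $(-\De)^s u\le\la u$, and because $u$ does not vanish outside $\Om^-$ (it is positive in all of $\Om$), no upper bound on $\la$ follows; in the nonlocal setting the comparison available for a function positive in a subdomain and nonnegative in all of $\R^N$ is the supersolution one. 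The correct (and simpler) route, which is the paper's, is on $\Om^+$: there $a>0$, so $(-\De)^s u\ge\la u$ in $\Om^+$ with $u>0$ in $\Om^+$ and $u\ge0$ in $\R^N$, whence $\la<\la_1(\Om^+)$. This yields the finite threshold $\La$, and then your concluding connectedness argument goes through exactly as in the paper.

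Three smaller points are glossed over. In case (a) with $a(x_0)<0$ the Liouville theorems of \cite{CLO1,ZCCY} do not apply (they concern a positive coefficient); the limit is excluded instead by evaluating $(-\De)^s v$ at the interior global maximum $y=0$, where it is nonnegative yet equals $a(x_0)<0$ (and for $N\le 2s$ one needs \cite{FQ}, not subcriticality). In the boundary case there is no off-the-shelf ``half-space Liouville theorem'' invoked in the paper: one must first show, via the boundary barrier $w_k(y)\le C\,d_k(y)^{2s-\theta}$ of \cite{BDGQ1}, that the rescaled maximum point stays at positive distance from $\p\R^N_+$, and then contradict the monotonicity result $\frac{\p w}{\p y_N}>0$ of \cite{BDGQ2}; without the barrier the limit could be trivial or the maximum could escape to the boundary. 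Finally, ``positivity is propagated along $\mathcal{C}$ via the strong maximum principle'' hides the real work: one needs the boundary regularity of \cite{ROS}, i.e. convergence of $(u_n-u_0)/d_\Om^s$ in $C^\al(\Omb)$, together with the Hopf-type lower bound $u_0\ge c\,d_\Om^s$, to rule out sign-changing solutions accumulating on a positive solution (or on the first eigenfunction near the bifurcation point); the strong maximum principle alone does not exclude solutions that change sign only in a thin neighborhood of $\p\Om$.
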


\bigskip

It is worthy of mention that in some cases one can guarantee the existence of positive solutions 
of \eqref{bvp} also for values $\la>\la_1(\Om)$. Indeed, a more precise description of the bifurcation 
diagram near the point $(\la_1(\Om),0)$ can be performed by using for instance the Crandall-Rabinowitz 
theorem (cf. \cite{CR}), but this is definitely out of the scope of this article. See also 
\cite{GS} and \cite{FL}.

\medskip

Let us briefly mention our method of proof. For the Liouville theorem we will use the moving 
planes method to establish monotonicity of the solution in the direction $x_N$. Nevertheless, 
the application of the method is by no means standard, since in 
spite of the problem being posed in $\R^N$, at some point we can deal with problems in half-spaces, 
which allows us to introduce the Green's function obtained in \cite{FW}, as was done in our 
previous work \cite{BDGQ2}. The use of the Green's function is what definitely distinguishes the 
case $s\in (0,1)$ from $s=1$, thus allowing the hypotheses to be less restrictive. 

As for the a priori bounds, we follow the approach in \cite{BDGQ1} (but see also \cite{CLL} for a 
related approach). The blow-up method introduced in \cite{GS2} is used, but we need to resort to 
the barriers introduced in \cite{BDGQ1} when the limit problem is posed in a half-space. Finally, 
Theorem \ref{th-bvp} can be achieved with an application of the global bifurcation theorem of 
Rabinowitz (cf. \cite{R}), and an analysis along the lines of the one made in \cite{DPQ}.

\medskip

The rest of the paper is organized as follows: in Section 2 we will prove our Liouville theorem. 
Section 3 is dedicated to obtaining the a priori bounds, while in Section 4 we will consider 
the question of existence of solutions.

\bigskip

\section{The Liouville Theorem}
\setcounter{section}{2}
\setcounter{equation}{0}

This section is dedicated to the proof of Theorem \ref{th-Liouville}. The main step in the proof is 
to show that any bounded, positive solution $u$ of \eqref{problema} has to be increasing in the $x_N$ 
direction. For this we will use the moving planes method as in \cite{BDGQ2} (see also \cite{FW}).

We begin by introducing some notation, which is for the most part rather standard in this context. We 
denote $x=(x',x_N)$ for points $x\in \R^N$ and for $\la\in \R$, let
$$
\begin{array}{l}
\Sigma_\la :=\{ x\in \R^N:\ x_N<\la\},\\[.5pc]
T_\la :=\{x\in \R^N:\ x_N=\la\},\\[.5pc]
x^\la:=(x',2\la-x_N) \ \hbox{(the reflection of }x \hbox{ with respect to } T_\la).
\end{array}
$$
For a positive, bounded, classical solution $u$ of problem \eqref{problema}, we 
also set
$$
\begin{array}{ll}
u_\la(x)= u(x^\la)\\[0.25pc]
w_\la(x)= u_\la(x)-u(x) 
\end{array}
\quad x\in \R^N.
$$

\smallskip

\begin{proof}[Proof of Theorem \ref{th-Liouville}]
Assume there exists a positive, bounded, classical solution $u$ of \eqref{problema}. The 
proof proceeds in two main stages: first we show that $u$ is increasing in the 
$x_N$ direction, that is, $w_\la\ge 0$ in $\Sigma_\la$ for every $\la\in \R$. Then we will prove that this 
is impossible by a simple principal eigenvalue argument. All this will be accomplished in 
a series of steps.

\bigskip

\noindent {\bf Step 1}. $w_\la \ge 0$ in $\Sigma_\la$ when $\la \le 0$.

\smallskip

\noindent By contradiction let us suppose that there exists $\la\leq 0$ such that $w_\la < 0$ somewhere in 
$\Sigma_\la$. Then we can define, for $\lambda\leq 0$, the nonempty open set
\begin{equation}\label{D}
D_\la=\{x\in \Sigma_\la:\ w_\la(x)<0\},
\end{equation}
and the function 
\begin{equation}\label{v_z}
v_\la = w_\la \chi_{D_\la}\leq 0.
\end{equation}
Observe that $x_N^\la > x_N$ when $x\in\Sigma_{\lambda}$, so that the monotonicity of both $h$ and $f$ and the nonnegativity of 
$f$ give, for $x\in D_\la$: 
\begin{equation}\label{claim1}
(-\Delta)^{s} w_\la (x) \geq h(x_N)(f(u^\la(x))-f(u(x)))\geq 0,
\end{equation}
since $h(x_N)\le 0$ for $x\in \Sigma _\la$ when $\la \le 0$.

Arguing as in Lemma 5 of \cite{BDGQ2}, we see that 
\begin{equation}\label{bbb}
(-\De)^s v_\la \ge (-\De)^s w_\la \ge 0 \quad \hbox{in }D_\la.
\end{equation} 
Since $v_\la =0$ in $\R^N\setminus D_\la$, we may apply the maximum principle for open sets contained 
in a half-space (Lemma 4 in \cite{BDGQ2}) 
to obtain that $w_\la \ge 0$ in $D_\la$, a contradiction. This contradiction shows that $D_\la$ is 
empty, so that $w_\la \ge 0$ in $\Sigma_\la$ when $\la\le 0$.

\bigskip

\noindent {\bf Step 2}. Setting 
$$
\la_*:=\sup\{ \la \in \R:\  w_\mu \ge 0  \hbox{ in } \Sigma_\mu \hbox{ for every } \mu<\la\},
$$
we have $\la_*=+\infty$.

\smallskip

\noindent Assume for a contradiction that $\la_*<+\infty$. We first 
observe that, by the definition of $\la_*$, there exists a sequence of positive numbers 
$\{\la_n\}$ such that $\la_n\downarrow \la_*\ge 0$  
and points $x_n \in \Sigma_{\la_n}$ such that $w_{\la_n} (x_n)<0$. From now on, we will 
use the notation $w_n$, $v_n$ and $D_n$ instead of $w_{\la_n}$, $v_{\la_n}$ and $D_{\la_n}$, 
where all these functions are obtained by just setting $\la=\la_n$ in the previous definitions. 
Let us define
$$
W_{n}:=D_{n}\cap \mathbb{R}^{N}_{+},
$$
where $\R^N_+=\{x\in \R^N:\ x_N>0\}$. 
We claim that $W_n\neq \emptyset$. To prove this, assume on the contrary that  
$$
D_{n}\subseteq \overline{\mathbb{R}^{N}_{-}},
$$ 
where $\R^N_-=\{x\in \R^N:\ x_N<0\}$. Arguing as in Step 1 we see that both \eqref{claim1} and \eqref{bbb} 
hold. Thus by the maximum principle in \cite{BDGQ2} we arrive at $v_n \ge 0$ in $D_n$, which is 
a contradiction. The contradiction shows that $W_n\ne \emptyset$.

To proceed further, choose points $x_n \in W_n$ such that
\begin{equation}\label{losx}
-v_n (x_n )\geq\frac{1}{2}\|v_ n\|_{L^{\infty}(W_n)}.
\end{equation}
Notice that, by definition, $0<x_{n,N}\leq \la_n$, and we may assume by passing to a subsequence that 
\begin{equation}\label{limite}
x_{n, N}\to a\in [0,\la_*].
\end{equation} 
We define next the functions
$$
\widetilde{u}_n (x) :=u(x'+x'_n,x_N),\quad x\in \R^N,
$$
which are positive solutions of problem \eqref{problema}. In addition, they verify $\| \widetilde{u}_n\|_{L^\infty(\R^N)}=
\| u \|_{L^\infty(\R^N)}$. We also set, for $x\in \R^N$,
\begin{align*}
\widetilde{w}_n (x) & :=w_n(x'+x'_n,x_N)\\
\widetilde{v}_n (x) & :=v_n(x'+x'_n,x_N).
\end{align*}
Observe that $\widetilde{v}_n= \widetilde{w}_n \chi_{\widetilde{D}_n}$, 
where $\widetilde{D}_{n}=\{x\in\Sigma_{\la_n}:\, \widetilde{w}_n(x)<0\}$.

Since $x_n \in W_n$, it is easily seen that 
\begin{equation}\label{zn}
z_n:=(0,x_{n,N})\in \widetilde{W}_{n}:=\widetilde{D}_{n}\cap \mathbb{R}^{N}_{+}.
\end{equation}
Moreover, by \eqref{losx}, we also have
\begin{equation}\label{losx-2}
-\widetilde{v}_n (z_n )\geq\frac{1}{2}\|\widetilde{v}_ n\|_{L^{\infty}(\widetilde{W}_n)},
\end{equation}
and 
\begin{equation}\label{limite-2}
z_n \to z_0=(0,a) \in \Sigma_{\la_*}\setminus \R^N_-, 
\end{equation}
owing to \eqref{limite}. 
Our next intention is to obtain an integral inequality involving the $L^\infty$ norm of 
$\widetilde{v}_n$ in $\widetilde{W}_n$, in the spirit of \cite{FW}. 
Arguing as in Lemma 5 in \cite{BDGQ2} we deduce that
$$
\begin{array}{rl}
(-\Delta)^s \widetilde{v}_n \hspace{-2mm} & \ge h(x_N)(f(\widetilde{u}_n^{\la_n}) -
f(\widetilde{u})) \chi_{\widetilde{W_n}}\\[.25pc]
& \ge a_n \chi_{\widetilde{W_n}} \widetilde{v}_n
\end{array}
\quad \hbox{in } \Sigma_{\la_n}
$$
in the viscosity sense, where, since $x_N\leq \lambda_*+1$,
\begin{equation}\label{an}
a_n(x):= h(\la_*+1) \frac{f(\widetilde{u}_n^{\la_n})-f(\widetilde{u}_n)}
{\widetilde{u}_n^{\la_n}-\widetilde{u}_n}, \quad x\in \R^N.
\end{equation}
Proceeding now as in Lemma 6 in \cite{BDGQ2}, we obtain
\begin{equation}\label{frid}
\widetilde{v}_n(x)\geq\int_{\widetilde{W}_n}{G_{n}(x,y) a_n(y) \widetilde{v}_n(y)\, dy},\quad x\in\Sigma_{\la_n}.
\end{equation}
Here $G_n(x,y)$ stands for the Green's function in the half-space $\Sigma_{\la_n}$. Notice that, by means of a 
change of variables it is easily seen that 
\begin{equation}\label{Gn}
G_n(x,y)=G_\infty^+ (x',\la_n-x_N,y',\la_n-y_N), \quad \hbox{for }x,y \in \Sigma_{\la_n},
\end{equation}
where $G_\infty^+$ stands for the Green's function in the ``standard" half-space $\R^N_+$ (cf. \cite{FW}). 
Here and in what follows we are taking the liberty of expressing all Green's functions as depending on 
two variables $(x,y)$ or four variables $(x',x_N,y',y_N)$, hoping that no confusion arises. 
Taking $x=z_n\in\Sigma_{\la_n}$ in \eqref{frid} and using \eqref{losx-2} we arrive at
$$
\frac{1}{2}\|\widetilde{v}_n\|_{L^{\infty}(\widetilde{W}_n)}\leq  
\|\widetilde{v}_n\|_{L^{\infty}(\widetilde{W}_n)}\int_{\widetilde{W}_n}{G_{n}(z_n,y) a_n(y)\, dy}.
$$
Since we are assuming that the norms $\|\widetilde{v}_n\|_{L^{\infty}(\widetilde{W}_n)}$ are nonzero, 
we deduce that 
\begin{equation}\label{arranque}
\frac{1}{2} \le \int_{\widetilde{W}_n}{G_{n}(z_n,y) a_n(y)\, dy}.
\end{equation}
Our ultimate aim is to show that this inequality is impossible.

Observe that the sequence $\{u_n\}$ is uniformly bounded and every $u_n$ is a positive solution of 
\eqref{problema}. Then, with the use of standard regularity (cf. \cite{CS,S}) and by a diagonal argument, we 
may assume by passing to a subsequence that, for every $\beta\in (0,1)$,
$$
\widetilde{u}_n\to \widetilde{u}\mbox{ in } \mathcal{C}_{\rm loc}^{2s+\beta}(\R^N),
$$
where $\widetilde{u}$ is a nonnegative solution of \eqref{problema}. 
By the strong maximum principle, we may ensure that either $\widetilde{u}>0$ or $\widetilde{u}\equiv 0$ 
in $\R^N$. We have to analyze these two cases separately. 

\medskip

\noindent {\it Case (a)}: $\widetilde{u}>0$ in $\R^N$. We claim that 
\begin{equation}\label{interrogante}
\widetilde{w}_{\la_*}:=\widetilde{u}^{\la_*}-\widetilde{u}>0,\quad \hbox{in }\Sigma_{\la_*}.
\end{equation}
Indeed, it is clear that $\widetilde{w}_{\la_*}\ge 0$ in $\Sigma_{\la_*}$. So, suppose that there 
exists $x_0\in\Sigma_{\lambda_*}$ such that $\widetilde{w}_{\lambda_*}(x_0)=0$. Then, 
as in \eqref{claim1}, 
$$
(-\De)^s \widetilde{w}_{\la_*} (x_0) \geq h(x_{0,N})(f(\widetilde{u}^{\la_*}(x_0))-f(\widetilde{u}(x_0)))=0,
$$
where $x_{0,N}$ is the last component of the point $x_0$. On the other hand, 
using the fact that $\widetilde{w}_{\la_*}$ is antisymmetric, it follows that
\begin{align*}
0 & \le  (-\De)^s \widetilde{w}_{\la_*} (x_0)  = -\left(  \int_{\Sigma_{\la_*}} + \int_{\Sigma_{\la_*}^c} \right) 
\frac{\widetilde{w}_{\la_*}(y)}{|x_0-y|^{N+2s}} dy\\
& = -\int_{\Sigma_{\la_*}} \widetilde{w}_{\la_*}(y)\left(\frac{1}{|x_0-y|^{N+2s}} - \frac{1}{|x_0-y^{\la_*}|^{N+2s}}\right)\, dy,
\end{align*}
where we have made the change of variables $y \to y^{\la_*}$ in the integral taken in $\Sigma_{\la_*}^c$. 
Now, for $y\in \Sigma_{\la_*}$ we always have $|x_0-y| \le |x_0-y^{\la_*}|$, so that the integrand above is 
nonnegative, and we deduce that $\widetilde{w}_{\la_*}=0$ in $\R^N$ (that is, $\widetilde{u}$ is symmetric with respect to the 
hyperplane $T_{\la_*}$). This is impossible since, taking $\widetilde{x} \in \Sigma_{\la_*}$ with 
$\widetilde{x}_N<0 < 2\la_* - \widetilde{x}_N$ and $h(2\la_*-\widetilde{x}_N)>0$ we obtain 
$$
0=(-\De)^s \widetilde{w}_{\la_*} (\widetilde{x}) = f(\widetilde{u}(\widetilde{x}))(h(2\la_*-\widetilde{x}_N)-h(\widetilde{x}_N))>0,
$$
which is a contradiction. This contradiction shows \eqref{interrogante}. Notice that, for the problem at hand, 
to prove that there cannot exist symmetric solutions with respect to hyperplanes $T_{\lambda}$, 
the presence of the function $h$ is crucial, contrary to what happens in the case of a half-space with zero Dirichlet condition, 
treated in \cite{BDGQ2}.

\medskip

Next we observe that by our choice of $z_n$, it follows that $\widetilde{w}^{\la_*}(z_0)=0$. 
Since we have just shown the positivity of $\widetilde{w}^{\la_*}$ in $\Sigma_{\la_*}$,  
we have $z_0\in T_{\la_*}$, that is, $a=\la_*$ in \eqref{limite-2}. Let us show that this contradicts 
\eqref{arranque}. Indeed, using that the coefficients $a_n$ are uniformly bounded by the 
Lipschitz constant $L$ of $f$, and $\widetilde{W}_n\subset \Sigma_{\la_n}\cap \R^N_+$, we get 
from \eqref{arranque} that:
\begin{equation}\label{eq1}
\frac{1}{2} \le L \int_{\Sigma_{\la_n}\cap \R^N_+} G_n(z_n,y) dy.
\end{equation}
Taking into account the characterization \eqref{Gn} of $G_n$ and performing a change of variables 
in \eqref{eq1}, it follows that
\begin{eqnarray}
\frac{1}{2} & \le &\int _{\Sigma_{\la_n}\cap \R^N_+}  G^+_\infty(0,\la_n-z_{n,N},y) dy\nonumber\\
&\le& \int _{\Sigma_{\la_*+1}\cap \R^N_+}  G^+_\infty (0,\la_n-z_{n,N},y) dy.\label{eq2}
\end{eqnarray}
However, using part (c) of Lemma 7 in \cite{BDGQ2}, we deduce that the last integral converges to 
zero, since $\la_n-z_{n,N}\to 0$. This contradiction rules out the case $\widetilde{u}>0$ in 
$\R^N$.

\medskip

\noindent {\it Case (b)}: $\widetilde{u} \equiv 0$ in $\R^N$. 
Thanks to our hypotheses on $f$ and using that $u_n\to 0$ uniformly on compact sets of $\R^N$, 
we deduce that $a_n\to 0$ as $n\to\infty$, uniformly on compact sets of $\R^N$. 
We will prove that this entails the convergence of the right hand side of \eqref{arranque} to zero 
when $n\to\infty$, obtaining the same type of contradiction as before.

In fact, using \eqref{arranque} and \eqref{Gn}, we see that
\begin{align}\label{op3}
\frac{1}{2} & \le \int_{\Sigma_{\la_n}\cap \R^N_+} G_n(z_n,y) a_n(y) dy \nonumber\\ 
&= \int_{\Sigma_{\la_n}\cap \R^N_+}  G_\infty^+(0,\la_n-z_{n,N},y',\la_n-y_N) a_n (y',\la_n-y_N) dy \nonumber\\
&\le \| a_n\|_{L^\infty (B_R^+)} \int_{\Sigma_{\la_*+1}\cap B_R^+}  G_\infty^+(0,\la_n-z_{n,N},y',\la_n-y_N) dy\\
& +L \int_{\Sigma_{\la_*+1}\cap (B_R^+)^c}  G_\infty^+(0,\la_n-z_{n,N},y',\la_n-y_N) dy, \nonumber
\end{align}
where $B_R^+=B_R\cap \R^N_+$. A minor variation of parts (a) and (b) in Lemma 7 of \cite{BDGQ2} gives
$$
\lim_{R\to +\infty} \int_{\Sigma_{\la_*+1}\cap (B_R^+)^c}  G_\infty^+(0,\la_n-z_{n,N},y',\la_n-y_N) dy=0,
$$
uniformly in $n\in \N$, while 
$$
\int_{\Sigma_{\la_*+1}\cap B_R^+}  G_\infty^+(0,\la_n-z_{n,N},y',\la_n-y_N) dy\le C
$$
for fixed $R$. Thus we can fix $R>0$ such that the last term in \eqref{op3} is less than $\frac{1}{4}$, 
say, to get
$$
\frac{1}{4} \le C \| a_n\|_{L^\infty (B_R^+)},
$$
which is a contradiction since $a_n\to 0$ uniformly on compact sets.

\medskip

\noindent {\bf Step 3}. Completion of the proof. 

\smallskip

\noindent By Step 2, we deduce that $u$ is nondecreasing 
in the $x_N$ direction. Next, for every $k\in \N$, let $B_k=B_1(ke_N)$ be the ball of center 
$ke_N$ and radius $1$. By the monotonicity of $u$ shown above, we obtain, for every $k\ge 1$:
$$
u(x) \ge u_1=\min_{B_1} u>0, \quad x\in B_k.
$$
Setting $m_0=\inf_{u_1 \le t \le \|u\|_{L^\infty(\R^N)}} \frac{f(t)}{t}>0$, 
we see that 
$$
(-\De)^s u \ge h(k-1) m_0 u \quad \hbox{in }B_k.
$$ 
According to well-known properties of the principal eigenvalue (see for instance Theorem 1.1 of \cite {QSX}), 
we deduce that 
$$
h(k-1) m_0 \le \la_1 (B_k)=\la_1(B_1),
$$
and we arrive at a contradiction by letting $k\to +\infty$, since we are assuming that 
$h$ goes to infinity at infinity. The proof is concluded.
\end{proof}

\bigskip

\section{A priori bounds}
\setcounter{section}{3}
\setcounter{equation}{0}

In this section we will show that all solutions of \eqref{bvp} are a priori bounded, provided that 
$p$ is subcritical and $a$ verifies hypotheses (A). The technique is the standard one introduced in 
\cite{GS2}, with the adaptations to the nonlocal setting provided by \cite{BDGQ1}. It relies in the 
Liouville theorems obtained in \cite{ZCCY} and \cite{CLO1}, the monotonicity of solutions in half-spaces proved in 
\cite{BDGQ2} and our new Theorem \ref{th-Liouville}.

\begin{teorema}\label{th-apriori}
Assume $s\in (0,1)$, $p>1$ and let $a\in C^\al(\Omb)$ verify hypotheses {\rm (A)}. If $N\le 2s$ or $N>2s$ and 
$p$ is such that
$$
p<\frac{N+2s}{N-2s},
$$
then for every $\la_1,\la_2\in \R$ with $\la_2 > \la_1$ there exists $M=M(\la_1,\la_2)$ such that 
$$
\| u\|_{L^\infty(\Omega)} \le M
$$
for every positive, classical solution $u$ of \eqref{bvp} with $\la\in [\la_1,\la_2]$.
\end{teorema}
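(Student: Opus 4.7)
We argue by contradiction using a blow-up argument in the spirit of Gidas--Spruck \cite{GS2}, adapted to the nonlocal setting as in \cite{BDGQ1}. If no such $M$ exists, one can find $\la_n \in [\la_1,\la_2]$ and positive classical solutions $u_n$ of \eqref{bvp} (with $\la = \la_n$) such that $M_n := \|u_n\|_{L^\infty(\Om)} = u_n(x_n) \to +\infty$, and up to a subsequence $x_n \to x_0 \in \Omb$. The strategy is to rescale $u_n$ around $x_n$ so that the rescaled sequence $v_n$ satisfies $v_n(0) = 1 = \|v_n\|_\infty$, pass to a positive bounded limit $v$ solving a limit equation on $\R^N$ or on a half-space, and derive a contradiction from a suitable Liouville theorem. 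Three cases arise depending on the location of $x_0$: (A) $x_0 \in \Om \setminus \G$, (B) $x_0 \in \p\Om$, and (C) $x_0 \in \G$.

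In cases (A) and (B) the weight $a$ stays positive near $x_n$: in (B) because $\G$ and $\p\Om$ are disjoint, so $a(x_0) > 0$; in (A) because $x_0 \notin \G$ gives $a(x_0)\ne 0$, and the alternative $a(x_0)<0$ is incompatible with $v(0) = 1$ being a global maximum of the limit. The standard rescaling $\mu_n := M_n^{-(p-1)/(2s)}$ together with $v_n(y) := \mu_n^{2s/(p-1)} u_n(x_n + \mu_n y)$ yields
\begin{equation*}
(-\De)^s v_n = \mu_n^{2s} \la_n v_n + a(x_n + \mu_n y)\, v_n^p,
\end{equation*}
and the regularity theory of \cite{CS,S} plus a diagonal argument produces a subsequential limit $v_n \to v$ in $C^{2s+\be}_{\rm loc}$, where $v$ is a positive bounded solution of $(-\De)^s v = a(x_0)\, v^p$ posed either in $\R^N$ (case (A), and case (B) when $\hbox{dist}(x_n,\p\Om)/\mu_n \to +\infty$) or in a half-space with zero exterior datum (case (B), otherwise). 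Subcriticality of $p$ then yields a contradiction by the Liouville theorems of \cite{CLO1, ZCCY} (in $\R^N$) and \cite{BDGQ2} (in a half-space).

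In case (C) the previous rescaling degenerates because $a(x_0) = 0$, and the key new idea is to use the slower rescaling $\hat\mu_n := M_n^{-(p-1)/(2s+\g)}$ with $v_n(y) := \hat\mu_n^{(2s+\g)/(p-1)} u_n(x_n + \hat\mu_n y)$. This still gives $v_n(0) = 1 = \|v_n\|_\infty$ and leads to
\begin{equation*}
(-\De)^s v_n = \hat\mu_n^{2s} \la_n v_n + \hat\mu_n^{-\g}\, a(x_n + \hat\mu_n y)\, v_n^p.
\end{equation*}
After rotating so that the tangent hyperplane $T_{x_0}\G$ becomes $\{y_N = 0\}$, and extracting a subsequence with $d_n/\hat\mu_n \to c \in [0,+\infty)$ (where $d_n := \hbox{dist}(x_n, \G)$), hypothesis (A2) and the smoothness of $\G$ give the locally uniform convergence $\hat\mu_n^{-\g}\, a(x_n + \hat\mu_n y) \to h(y_N)$, where, assuming $x_n \in \Om^+$ (the other case being analogous),
\begin{equation*}
h(t) = \left\{
\begin{array}{ll}
b_1(x_0)(t+c)^\g, & t \ge -c,\\
-b_2(x_0)|t+c|^\g, & t < -c.
\end{array}
\right.
\end{equation*}
The regularity theory of \cite{CS,S} then produces $v_n \to v$ in $C^{2s+\be}_{\rm loc}(\R^N)$, and $v$ is a positive bounded solution of $(-\De)^s v = h(y_N)\, v^p$ in $\R^N$. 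After the shift $t \mapsto t - c$, $h$ satisfies (H1)--(H3) and $f(u) = u^p$ satisfies (H4)--(H5), so Theorem \ref{th-Liouville} delivers the required contradiction.

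The main obstacle is case (C): identifying the correct rescaling exponent $(p-1)/(2s+\g)$ that simultaneously captures the blow-up of $u_n$ and the vanishing of $a$ at $\G$, verifying that the rescaled weight converges locally uniformly to a function satisfying the hypotheses (H), and establishing enough compactness and regularity to pass to the limit in the nonlocal equation. The other cases are essentially routine adaptations of the techniques already present in \cite{BDGQ1, GS2}, with the subcriticality of $p$ entering only through the application of \cite{CLO1, ZCCY} in case (A).
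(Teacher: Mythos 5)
Your overall strategy (blow-up around maximum points, three cases according to the location of $x_0$, and the new rescaling exponent $\frac{p-1}{2s+\g}$ near $\G$ feeding into Theorem \ref{th-Liouville}) is the same as the paper's, but case (C) as you wrote it has a genuine gap: you cannot simply ``extract a subsequence with $d_n/\hat\mu_n\to c\in[0,+\infty)$'', because the whole sequence $d_n/\hat\mu_n$ may tend to $+\infty$, and then no subsequence has a finite limit. In that regime the rescaled weight satisfies $\hat\mu_n^{-\g}a(x_n+\hat\mu_n y)\approx b_1(x_n)\bigl(\tfrac{d_n}{\hat\mu_n}+\nu(\xi_n)\cdot y+o(1)\bigr)^{\g}\to+\infty$ locally uniformly, so there is no limit equation at the scale $\hat\mu_n$ and Theorem \ref{th-Liouville} cannot be invoked. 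The paper treats this subcase with a second rescaling at the finer scale $\beta_k\eta_k$, $\beta_k=(\eta_k/d_k)^{\g/(2s)}$, which renormalizes the weight to the constant $b_1(x_0)$ (or $-b_2(x_0)$) and reduces the contradiction to the constant-coefficient Liouville theorems used in case (A). Without this intermediate rescaling your argument does not close.

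The boundary case (B) is also not as routine as you present it. When $d_\Om(x_n)/\mu_n$ stays bounded one must rescale around the projection of $x_n$ onto $\p\Om$ and, crucially, show that the limit is nontrivial with its maximum attained at an \emph{interior} point of $\R^N_+$; interior regularity alone does not prevent the normalized maximum from collapsing onto the boundary of the limiting half-space, in which case the limit could lose the normalization $v=1$ there. The paper secures this with the uniform boundary decay $w_k(y)\le C\,\mathrm{dist}(y,\p D_k)^{2s-\theta}$ (Lemma 6 of \cite{BDGQ1}), which forces $d>0$. Moreover the contradiction in the half-space is not a half-space Liouville theorem from \cite{BDGQ2} but its monotonicity result $\frac{\p w}{\p y_N}>0$, contradicted by $\nabla w(y_0)=0$ at the interior maximum---so the interior location of $y_0$ is exactly what the barrier buys. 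Two smaller points: hypotheses (A) only give $a(x_0)\neq 0$ on $\p\Om$, not $a(x_0)>0$ (the case $a(x_0)<0$ must be disposed of via the sign of $(-\De)^s$ at a global maximum, as in case (A)); and when $N\le 2s$ the theorem imposes no restriction on $p$, so the whole-space contradiction with a positive constant weight uses Theorem 1.2 of \cite{FQ} rather than \cite{CLO1,ZCCY}.
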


\begin{proof}
Assume on the contrary the existence of an interval $[\la_1,\la_2]$ and sequences 
$\{\la_k\}\subset [\la_1,\la_2]$ and $u_k$ such that $u_k$ is a positive, classical 
solution of \eqref{bvp} with $\la=\la_k$ and 
$$
M_k:= \| u_k \|_{L^\infty(\Omega)}  \to +\infty.
$$
For every $k$, take a point $x_k \in \Om$ where $u_k$ achieves its maximum. We may assume 
$x_k\to x_0\in \Omb$. Now, three cases are possible:

\begin{itemize}

\item[(a)] $x_0\in \Om\setminus \Gamma$;

\smallskip
\item[(b)] $x_0 \in \Gamma$;

\smallskip
\item[(c)] $x_0\in \p\Om$.

\end{itemize}

The cases (a) and (c) are to some extent standard, and only the remaining case (b) deserves special 
attention. Let us see that we reach a contradiction assuming each one of them.

\medskip

\noindent (a) Let $\mu_k=M_k^{-\frac{p-1}{2s}}\to 0$ and introduce the functions
$$ 
v_k (y) = \frac{ u_k (x_k +\mu_k y)}{M_k}, \quad  y\in \Om_k,
$$
where 
\begin{equation}\label{def-omega-k}
\Om_k:= \{ y\in \R^N: \ x_k +\mu_k y \in \Om \}.
\end{equation}
It can be easily seen that $\Om_k\to \R^N$ as $k\to +\infty$. On the other hand, 
it is clear that $v_k$ verifies $0<v_k\le 1$ and $v_k(0)=1$. Moreover, a short calculation 
shows that $v_k$ is a solution of the problem 
$$
\left\{
\begin{array}{ll}
(-\De)^s v_k = \la_k \mu_k ^{2s} v_k + a_k (y) v_k^p & \hbox{in } \Om_k\\
\ \ v_k=0 & \hbox{in }  \R^N \setminus \Om_k,
\end{array}
\right.
$$
where $a_k(y)= a(x_k+\mu_k y)$, $y\in \Om_k$. Thus we may use standard regularity (cf. \cite{CS,S}) 
to obtain that, through a subsequence, $v_k\to v$ uniformly in compact sets of $\R^N$, where 
$v$ is a nonnegative, bounded, viscosity solution of 
$$
(-\De)^s v = a(x_0) v^p \quad \hbox{in } \R^N.
$$
By the strong maximum principle and regularity theory we actually have that $v$ is a 
positive, classical solution. 

By our hypotheses in this case, we know that $a(x_0)\ne 0$. If $a(x_0)<0$, we observe 
that $v(0)=1$ while $v\le 1$. Thus $v$ attains a global maximum at $y=0$ and 
$(-\De)^s v(0)=a(x_0)<0$, which is impossible. If, on the contrary, $a(x_0)>0$, 
we reach a contradiction when $N\le 2s$ by Theorem 1.2 in \cite{FQ}, and when $N>2s$ and $p$ is subcritical  
by Theorem 4 in \cite{ZCCY} (see also \cite{CLO1}). 

\medskip

\noindent (b) We may assume with no loss of generality that the outward normal  
to $\p \Om^+$ at $x_0$ is $\nu(x_0)=-e_N$. Let $d_k=d(x_k)$ and recall that we 
are denoting $d(x)=\hbox{dist}(x,\Gamma)$. Define
$$
\eta_k=M_k^{-\frac{p-1}{2s+\gamma}}\to 0
$$
and 
$$ 
\widetilde{v}_k (y) = \frac{ u_k (x_k +\eta_k y)}{M_k}, \quad  y\in \widetilde{\Om}_k,
$$
where $\widetilde{\Om}_k:= \{ y\in \R^N: \ x_k +\eta_k y \in \Om \}$. Observe that in 
the present situation $\widetilde{\Om}_k\to \R^N$ as $k\to +\infty$. It is not hard to 
see that $\widetilde{v}_k$ verifies the equation
\begin{equation}\label{eq-limite-2}
(-\De)^s \widetilde{v}_k = \la_k \eta_k^{2s} \widetilde{v}_k + \widetilde{a}_k (y) \widetilde{v}_k^p \quad 
\hbox{in } \widetilde{\Om}_k,
\end{equation}
where $\tilde a_k(y)= \eta_k^{-\gamma} a(x_k+\eta_k y)$. It is also immediate that 
$0<\widetilde{v_k}\le 1$ and $\widetilde{v}_k(0)=1$.

Observe next that, by the smoothness assumption on $\Gamma$, we can write
$$
d(x_k+\eta_k y)= d_k +\eta_k y\cdot \nu(\xi_k)+ o (\eta_k),
$$
where $\xi_k$ is the projection of $x_k$ onto $\Gamma$. Therefore, by hypotheses (A),
$$
\widetilde{a}_k(y)=
\left\{
\begin{array}{ll}
b_1(x_k+\eta_k y) \left| \frac{d_k}{\eta_k} + \nu(\xi_k)\cdot y + o(1)\right|^{\g}, & 
\hbox{if } x_k+\eta_k y\in \Om^+\\
-b_2(x_k+\eta_k y) \left| \frac{d_k}{\eta_k} + \nu(\xi_k)\cdot y + o(1)\right|^{\g}, & 
\hbox{if } x_k+\eta_k y\in \Om^-.
\end{array}
\right.
$$
There are two further possibilities to consider:

\begin{itemize}

\item[(b1)] Passing to a subsequence, $d_k/\eta_k\to d\ge 0$. Then 
$$
\widetilde{a}_k(y) \to b_1(x_0) (y_N-d)_+^\g - b_2(x_0) (y_N-d)_-^\g,
$$
and we are denoting, for real $t$, $t_+=\max\{t,0\}$, $t_-=\max\{-t,0\}$. 
Since the sequence $\widetilde{v}_k$ is bounded, we may pass to the limit as before to obtain 
that $\widetilde{v}_k\to \widetilde{v}$ locally uniformly in $\R^N$, where $\widetilde{v}$ is 
nonnegative, bounded and verifies $\widetilde{v}(0)=1$. Moreover, passing to the limit in 
\eqref{eq-limite-2} we see that 
\begin{equation}\label{eq-otra-contra}
(-\De)^s \widetilde{v}= h(y_N) \widetilde{v}^p \quad \hbox{in } \R^N,
\end{equation}
in the viscosity sense, 
with $h(t) = b_1(x_0) (t-d)_+^\g -  b_2(x_0) (t-d)_-^\g$, $t\in \R$. With a further translation 
in the $y_N$ direction we may suppose $d=0$. Moreover, by regularity we actually 
find that $\widetilde{v}$ is a classical solution of \eqref{eq-otra-contra}, which contradicts 
Theorem \ref{th-Liouville}.

\medskip

\item[(b2)] Passing to a subsequence, $d_k/\eta_k\to +\infty$. This assumption implies 
that for large $k$ all points $x_k$ remain inside $\Om^+$ or $\Om^-$. Assume first 
that they all lie in $\Om^+$. Let $\beta_k= (\eta_k/d_k)^\frac{\g}{2s}\to 0$ and introduce 
the functions
$$
\widetilde{w}_k (y)= \widetilde{v}_k(\beta_k y),
$$
which are easily seen to verify 
$$
(-\De)^s \widetilde{w}_k = \la_k (\beta_k\eta_k)^{2s} \widetilde{w}_k + \beta_k^{2s} \widetilde{a}_k (\beta_k y) 
\widetilde{w}_k^p 
$$
in $\{y\in \R^N: \ \beta_k y \in \widetilde{\Om}_k\}$. Moreover:
$$
\beta_k^{2s} \widetilde{a}_k (\beta_k y) =  b_1(x_k+\beta_k \eta_k y) 
(1+\beta_k ^{1+\frac{2s}{\g}} \nu(\xi_k)\cdot y + o(\beta_k^\frac{2s}{\g}))^\g.
$$
Thus we see that $\widetilde{w}_k\to \widetilde{w}$, which is a bounded, nontrivial 
solution of 
$$
(-\De)^s \widetilde{w} = b_1(x_0) \widetilde{w}^p \quad \hbox{in } \R^N,
$$
a contradiction to our hypotheses as in case (a), since $b_1(x_0)>0$. When the points $x_k$ lie in $\Om^-$ 
we argue exactly in the same way and we obtain a solution of the same equation with $b_1(x_0)$ replaced by 
$-b_2(x_0)$. The contradiction follows also as in case (a).
\end{itemize}

\medskip

\noindent (c) Assume again with no loss of generality $\nu(x_0)=-e_N$, where this time 
$\nu$ stands for the outward unit normal to $\p \Om$. Denote also $d_\Om(x)={\rm dist}(x,\p\Om)$. 
There are two cases to consider: by passing to a further subsequence, either $d_\Om(x_k) \mu_k^{-1}\to 
+\infty$ or $d_\Om(x_k) \mu_k^{-1}\to d\ge 0$, where $\mu_k=M_k^{-\frac{p-1}{2s}}\to 0$, as in case (a). 
In the former one we argue exactly as in case (a) to reach a contradiction. It is to be noted that 
the set $\Om_k$ given in \eqref{def-omega-k} verifies $\Om_k\to \R^N$ with this assumption. 

In the latter we reason as in \cite{BDGQ1}. Consider the projections $\tau_k$ of $x_k$ 
onto $\p \Om$, and introduce the functions
$$
w_k(y)= \frac{u_k(\tau_k+\mu_k y)}{M_k}, \quad y \in D_k,
$$
where
$$
D_k=\{y\in \R^N:\ \tau_k+\mu_k y \in \Om\}.
$$
It is immediate that $0\in \p D_k$ and $D_k\to \R^N_+$ as $k\to +\infty$. Moreover, $w_k$ 
solves the equation
\begin{equation}\label{eq-rescale-1}
\left\{
\begin{array}{ll}
(-\De)^s w_k = \la_k \mu_k ^{2s} w_k + \bar{a}_k (y) w_k^p & \hbox{in } D_k\\
\ \ w_k=0 & \hbox{in }  \R^N \setminus D_k,
\end{array}
\right.
\end{equation}
where now $\bar{a}_k(y)=a(\tau_k+\mu_k y)$.

Next consider the point $y_k = (x_k-\tau_k)/\mu_k\in D_k$. It is clear that $w_k(y_k)=1$, and 
that $|y_k|= d_\Om(x_k)\mu_k^{-1}\to d$. We claim that $d>0$. 

Indeed, by Lemma 6 in \cite{BDGQ1}, we can choose $\theta\in (s,2s)$ and $C>0$, 
$\de>0$ such that 
$$
w_k(y) \le C d_k(y)^{2s-\theta}, \quad \hbox{when } d_k(y)\le \de,
$$
where $d_k(y):=\hbox{dist}(y,\p D_k)$. Taking into account that $|y_k|\ge d_k(y_k)$ because 
$0 \in \p D_k$, we obtain
$$
1=w_k(y_k) \le C |y_k|^{2s-\theta},
$$
provided that $d_k(y_k)\le \de$, which shows that $|y_k|$ is bounded from below. This entails 
$d>0$. Passing to another subsequence, we have $y_k\to y_0$ with $|y_0|=d>0$, therefore 
$y_0\in \R^N_+$.

Arguing as in part (a) we obtain that $w_k\to w$ locally uniformly on compact sets of $\R^N_+$, 
where $w$ verifies $0\le w\le 1$, $w(y_0)=1$ and $w(y) \le Cy_N^{2s-\theta}$ for $y_N\le \de$. 
Thus $w$ is continuous in $\R^N$ and vanishes in $\R^N\setminus\R^N_+$. Then, we can pass 
to the limit in \eqref{eq-rescale-1} to obtain that $w$ is a nonnegative, bounded, viscosity solution of the problem 
$$
\left\{
\begin{array}{ll}
(-\De)^s w= a(x_0) w^p & \hbox{in }\R^N_+\\
\ \ w=0 & \hbox{in } \R^N \setminus \R^N_+.
\end{array}
\right.
$$
By regularity theory and the maximum principle it actually follows that $w\in C^\infty(\R^N_+)$ 
is positive in $\R^N_+$. Moreover, $w$ attains a global maximum at $y_0$, thus $\nabla w(y_0)=0$. 
This contradicts Theorem 1 in \cite{BDGQ2}, which ensures that $\frac{\p w}{\p y_N}>0$ in $\R^N_+$.

\medskip

Summing up, our assumption $M_k\to +\infty$ leads to a contradiction in all cases, and this 
concludes the proof of the theorem.
\end{proof}

\bigskip

\section{Proof of Theorem \ref{th-bvp}}
\setcounter{section}{4}
\setcounter{equation}{0}

In this section we will give the proof of our existence result, Theorem \ref{th-bvp}. 
The main tool is bifurcation theory, with the use of the a priori bounds given in 
Theorem \ref{th-apriori}.

Instead of working with \eqref{bvp}, it is more convenient to deal with its odd extension, namely
\begin{equation}\label{bvp-impar}
\left\{
\begin{array}{ll}
(-\De)^s u = \la u + a(x) |u|^{p-1} u & \hbox{in } \Om\\
\ \ u=0 & \hbox{in }  \R^N \setminus \Om.
\end{array}
\right.
\end{equation}
From now on, by a solution of \eqref{bvp-impar} we will mean a pair $(\la,u)$. 
Also, it will slightly simplify our proofs to consider solutions in the viscosity sense. 
As we have already remarked, viscosity solutions are indeed classical, so this will not 
suppose any loss in generality. Thus our natural space will be $\R\times C(\Omb)$.

Problem \eqref{bvp-impar} always possesses the branch of trivial solutions $\{(\la,0): \la \in \R\}$, 
but we are only interested in nontrivial solutions. Our purpose is to obtain positive 
solutions which bifurcate from the branch of trivial solutions at the value $(\la_1(\Om),0)$, 
where $\la_1(\Om)$ stands for the first eigenvalue of $(-\De)^s$ in $\Om$.  

To make this more precise, we recall that a continuum $\mathcal{C}\subset \R\times C(\Omb)$ is a closed 
connected set. We will say that $\mathcal{C}$ is a continuum of positive solutions which bifurcates from 
$(\la_1(\Om),0)$ if $(\la_1(\Om),0)\in \mathcal{C}$ 
and $\mathcal{C}\setminus \{(\la_1(\Om),0)\}$ consists of positive solutions only.
The next result is a consequence of the celebrated global bifurcation theorem of 
Rabinowitz.

\begin{lema}\label{lema-rabi}
Assume $p>1$ and $a\in C^\al(\Omb)$ for some $\al\in (0,1)$. Then there exists an unbounded continuum 
$\mathcal{C}_0 \subset \R\times C(\Omb)$ of positive solutions of \eqref{bvp-impar} 
bifurcating from $(\la_1(\Om),0)$.
\end{lema}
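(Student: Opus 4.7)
The plan is to cast \eqref{bvp-impar} as a fixed-point equation $u=F(\la,u)$ for a completely continuous operator on $\R\times C(\Omb)$, and then invoke the Rabinowitz global bifurcation theorem at the simple characteristic value $\la_1(\Om)$.

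First I would introduce the solution operator $K\colon C(\Omb)\to C(\Omb)$ assigning to $g\in C(\Omb)$ the unique viscosity solution $v$ of $(-\De)^s v=g$ in $\Om$ with $v=0$ in $\R^N\setminus\Om$. Using the interior and boundary regularity theory for the fractional Dirichlet problem (cf. \cite{CS,S}), one gets uniform $C^s(\R^N)$ estimates of the form $\|Kg\|_{C^s(\R^N)}\le C\|g\|_{L^\infty(\Om)}$, so $K$ is compact on $C(\Omb)$ by Arzel\`a--Ascoli. Setting
$$
F(\la,u):=K\bigl(\la u+a(x)|u|^{p-1}u\bigr),
$$
the problem \eqref{bvp-impar} is equivalent to $u=F(\la,u)$ in $C(\Omb)$, and $F$ is completely continuous. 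Since $p>1$, the nonlinear part is Fr\'echet differentiable at $u=0$ with zero derivative, hence $D_uF(\la,0)=\la K$.

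Second, I would recall that the characteristic values of $K$ coincide with the Dirichlet eigenvalues $\{\la_k(\Om)\}$ of $(-\De)^s$ in $\Om$, that $\la_1(\Om)$ is simple, and that its eigenfunction $\vf_1$ can be chosen positive in $\Om$ (standard for the fractional Laplacian with zero exterior data). In particular, $\la_1(\Om)$ is a characteristic value of odd algebraic multiplicity, and the Rabinowitz global bifurcation theorem produces a continuum $\mathcal{C}\subset \R\times C(\Omb)$ of nontrivial solutions of $u=F(\la,u)$ containing $(\la_1(\Om),0)$, such that either $\mathcal{C}$ is unbounded or $\mathcal{C}$ meets some other characteristic value $(\la_k(\Om),0)$ with $k\ge 2$.

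Third, I would define $\mathcal{C}_0$ to be the connected component through $(\la_1(\Om),0)$ of the closure of the positive solutions in $\mathcal{C}$, and verify that $\mathcal{C}_0=\mathcal{C}$. Near $(\la_1(\Om),0)$, standard bifurcation expansion gives $u=t\vf_1+o(t)$ in $C(\Omb)$ as $t\to 0$, so $u>0$ for small $t>0$. To globalize positivity I would show that $\{(\la,u)\in \mathcal{C}\setminus\{(\la_1(\Om),0)\}:\ u>0 \text{ in }\Om\}$ is both open and relatively closed in $\mathcal{C}\setminus\{(\la_1(\Om),0)\}$; by connectedness this forces every nontrivial element of $\mathcal{C}$ to be positive. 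The closedness step relies on the strong maximum principle for $(-\De)^s$: if positive solutions $(\la_n,u_n)$ converge to $(\la,u)$ with $u\not\equiv 0$ and $u\ge 0$, then $u$ satisfies $(-\De)^s u-c(x)u=0$ in $\Om$ with $c(x):=\la+a(x)|u|^{p-1}\in L^\infty(\Om)$, so $u>0$ in $\Om$. This in turn rules out the second Rabinowitz alternative, since the higher eigenfunctions $\vf_k$ ($k\ge 2$) change sign and no limit of positive solutions can bifurcate from $(\la_k(\Om),0)$. Consequently $\mathcal{C}_0$ must be unbounded, which is the conclusion of the lemma.

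The main obstacle, and the place requiring most care, is the global propagation of positivity along $\mathcal{C}$---in particular the closedness of the positivity set, which depends on applying the strong maximum principle for the fractional Laplacian to the linear equation satisfied by the limit. The remaining ingredients (compactness of $K$, simplicity of $\la_1(\Om)$, and the local structure of the bifurcating branch) are standard once the regularity framework is in place.
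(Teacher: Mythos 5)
Your functional--analytic setup (the compact solution operator $K$, the fixed point formulation, the Rabinowitz alternative at the simple characteristic value $\la_1(\Om)$) is the same as in the paper. The genuine gap is in the positivity propagation, and it is not where you locate it. You infer positivity from convergence in the uniform norm: near the bifurcation point you claim $u=t\vf_1+o(t)$ in $C(\Omb)$ ``so $u>0$ for small $t>0$'', and in your open/closed argument the hard direction is not the closedness you discuss (strong maximum principle applied to the limit), but the openness: if $u_0>0$ in $\Om$ and $u_n\to u_0$ in $C(\Omb)$ with $u_n$ sign-changing, uniform convergence does \emph{not} give $u_n>0$, because $u_0$ vanishes on $\p\Om$ (it decays like $d_\Om^s$) and the uniform error can dominate it near the boundary; the same objection invalidates the inference from $u=t\vf_1+o(t)$. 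The paper's proof supplies precisely the missing mechanism: it writes the equation satisfied by $v_n-v$ (respectively $u_n-u_0$), applies the boundary regularity of Ros-Oton and Serra (Theorem 1.2 in \cite{ROS}) to get $\left\|(v_n-v)/d_\Om^s\right\|_{C^\al(\Omb)}\to 0$, and combines this with the Hopf-type lower bound $v\ge c\,d_\Om^s$ (Proposition 2.7 in \cite{CRS}) to conclude positivity up to the boundary. Without a $d_\Om^s$-weighted estimate of this kind your argument does not close.

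There is a second, structural problem. Since \eqref{bvp-impar} is odd, the Rabinowitz continuum $\mathcal{C}$ is symmetric under $u\mapsto -u$ and therefore contains negative solutions whenever it contains positive ones; your conclusion that ``every nontrivial element of $\mathcal{C}$ is positive'' cannot be true, and in any case $\mathcal{C}\setminus\{(\la_1(\Om),0)\}$ is in general disconnected, so an open-and-closed argument on it does not yield that conclusion. What can be proved, and what the paper proves, is that $\mathcal{C}\subset\R\times\mathcal{P}$, where $\mathcal{P}$ is the set of one-signed functions; one then splits $\mathcal{C}\setminus\{(\la_1(\Om),0)\}=\mathcal{C}^+\cup\mathcal{C}^-$, observes that one of these is unbounded, and uses the oddness of \eqref{bvp-impar} to reflect $\mathcal{C}^-$ if necessary, obtaining the unbounded continuum $\mathcal{C}_0$ of positive solutions. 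This final extraction step is missing from your proposal.
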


\begin{proof}
For $h\in C(\Omb)$, consider the boundary value problem 
\begin{equation}\label{eq-lineal-aux} 
\left\{
\begin{array}{ll}
(-\De)^s v = h(x) & \hbox{in } \Om\\
\ \ v=0 & \hbox{in }  \R^N \setminus \Om.
\end{array}
\right.
\end{equation}
It is well-known that there exists a unique viscosity solution $v \in C(\Omb)$ of \eqref{eq-lineal-aux}. 
By Proposition 1.2 in \cite{ROS} we also have $v\in C^s(\Omb)$ and 
$$
\| v\|_{C^s(\Omb)} \le C \|h \|_{L^\infty(\Om)},
$$
for some positive $C$ independent of $h$. In this way, 
setting $v=Kh$, we define a compact, linear operator $K:C(\Omb)\to C (\Omb)$. 

Problem \eqref{bvp-impar} is equivalent to the fixed point equation
\begin{equation}\label{fpe}
u= \la K u + K (a |u|^{p-1}u)
\end{equation}
in $C(\Omb)$ (with a slight abuse of notation, we are still denoting by $a |u|^{p-1}u$ the 
Nemytskii operator of the function $a(x) |u|^{p-1} u$ defined from $C(\Omb)$ to $C(\Omb)$). Denote
$$
\mathcal{S}= \{ (\la,u)\in \R\times C(\Omb):\ u \hbox{ is a nontrivial solution of \eqref{fpe}}\}.
$$
We can apply Theorem 1.3 in \cite{R} to deduce that $\mathcal{S}$ contains a continuum $\mathcal{C}$ 
such that $(\la_1(\Om),0)\in \mathcal{C}$ and it is either unbounded or contains a point 
$(\mu,0)$ with $\mu\ne \la_1(\Om)$.

\smallskip

Next we argue as in \cite{DPQ}. 
Denote by $\mathcal{P}$ the set of functions in $C(\Omb)$ which do not change sign (observe that this is 
a closed set). We claim that:
\begin{equation}\label{eq-ult-claim}
\mathcal{C} \subset \R\times \mathcal{P}.
\end{equation}
We begin by proving the existence of a small $\e>0$ such that all solutions $(\la,u)\in \mathcal{C}$ with 
$\la\in B_\e(\la_1(\Om))\subset \R$ and $u \in B_\e(0)\subset C(\Omb)$ belong to $\R\times \mathcal{P}$. 
Indeed, suppose on the contrary that there exist sequences $\la_n \to \la_1(\Om)$, $u_n\to 0$ such that $u_n$ 
is a changing sign solution of \eqref{fpe} with $\la=\la_n$. Let 
$$
v_n= \frac{u_n}{\| u_n \|_{L^\infty(\Om)}}.
$$
It is easily seen that
\begin{equation}\label{fpe-2}
v_n= \la_n K v_n + \|u_n\|_{L^\infty(\Om)}^{p-1} K (a |v_n|^{p-1}v_n),
\end{equation}
and hence by the compactness of $K$ we see that there exists $v\in C(\Omb)$ such that $v_n\to v$ uniformly 
in $\Omb$ and $\| v\|_{L^\infty(\Om)}=1$. Passing to the limit in \eqref{fpe-2}, it is clear that $v$ verifies 
$v= \la_1(\Om) Kv$, that is, $v$ is an eigenfunction of $(-\De)^s$ associated to $\la_1(\Om)$. Hence 
we may assume with no loss of generality that $v\ge 0$ in $\Om$ and the strong maximum principle and Hopf's 
principle imply then that $v>0$ in $\Om$ and there exists $c>0$ such that 
\begin{equation}\label{posi}
v(x) \ge c d_\Om(x)^s \quad \hbox{for } x\in \Om,
\end{equation}
where $d_\Om(x)=\hbox{dist}(x,\p \Om)$ (cf. Proposition 2.7 in \cite{CRS}). Moreover, it is easily 
seen that 
\begin{equation}\label{eq-0}
\left\{
\begin{array}{ll}
(-\De)^s (v_n-v) = \la_n v_n - \la_1(\Om) v + a(x) |u_n|^{p-1} v_n & \hbox{in } \Om\\
\ \ v_n-v=0 & \hbox{in }  \R^N \setminus \Om.
\end{array}
\right.
\end{equation}
Since the right-hand side of the equation in \eqref{eq-0} converges uniformly to zero in $\Omb$, we may employ 
Theorem 1.2 in \cite{ROS} to deduce that 
$$
\left\| \frac{v_n-v}{d_\Om^s} \right\|_{C^\al(\Omb)} \to 0
$$
for some $\al\in (0,1)$. It then follows from \eqref{posi} that $v_n>0$ in $\Om$ if $n$ is large, against the assumption. 
This shows that $\mathcal{C}\cap (B_\e(\la_1(\Om))\times B_\e(0)) \subset \R\times \mathcal{P}$ for 
some small $\e>0$.

\smallskip

To prove that $\mathcal{C}\subset \R\times \mathcal{P}$, it is enough to show that 
$\mathcal{C}\cap (\R\times\mathcal{P})\cap (\R\times \overline{\mathcal{P}^c})=\emptyset$. If this were proved, we would 
have $\mathcal{C}= (\mathcal{C}\cap (\R\times \mathcal{P})) \cup 
(\mathcal{C}\cap (\R\times \overline{\mathcal{P}^c}))$, where 
these sets are disjoint and closed. Since we have shown that the first one is nonempty, the connectedness of $\mathcal{C}$ 
implies that the second one is empty, showing that $\mathcal{C}\subset \R\times \mathcal{P}$.

Thus let $(\la_0,u_0)\in \mathcal{C}\cap (\R\times\mathcal{P})\cap (\R\times \overline{\mathcal{P}^c})$. By the 
first part of the proof, we have $(\la_0,u_0)\ne (\la_1(\Om),0)$. Since $u_0\in \mathcal{P}$, we may 
assume without loss of generality that $u_0\ge 0$ in $\Omb$. By the strong maximum principle, either $u_0\equiv 0$ 
or $u_0>0$ in $\Om$. The first possibility leads, reasoning as above, to $\la_0=\la_1(\Om)$, which is impossible. 
Thus the second possibility holds and Hopf's principle gives in addition $u_0(x)\ge c d_\Om(x)^s$ in $\Om$ for some 
$c>0$. 

On the other hand, since $(\la_0,u_0)\in \R \times \overline{\mathcal{P}^c}$, there exists a sequence $(\la_n,u_n)
\subset \mathcal{C}$ such that $\la_n\to \la_0$, $u_n\to u_0$ and $u_n$ changes sign. We can argue as before to 
obtain that actually 
$$
\left\| \frac{u_n-u_0}{d_\Om^s}\right\|_{C^\al(\Omb)}\to 0
$$
as $n\to +\infty$, which implies that $u_n>0$ in $\Om$ for large $n$, a contradiction. The contradiction shows 
that $\mathcal{C}\cap (\R\times\mathcal{P})\cap (\R\times \overline{\mathcal{P}^c})=\emptyset$, thus establishing 
\eqref{eq-ult-claim}.

As a consequence of \eqref{eq-ult-claim}, we obtain that $\mathcal{C}$ is unbounded. Otherwise, we would 
have $(\mu,0)\in \mathcal{C}$ for some $\mu\ne \la_1(\Om)$. It is then seen much as before that $\mu$ 
is an eigenvalue of $(-\De)^s$ associated to a one-signed eigenfunction, which is impossible since 
$\mu\ne \la_1(\Om)$. Thus $\mathcal{C}$ is unbounded in $\R\times C(\Omb)$.

Finally, let $\mathcal{C}^\pm=\{(\la,u)\in \mathcal{C}:\ \pm u>0 \hbox{ in }\Om\}$. It is clear that 
$\mathcal{C}^+$ and $\mathcal{C}^-$ are disjoint, connected sets and $\mathcal{C}=\mathcal{C}^+\cup \{(\la_1(\Om),0\} 
\cup \mathcal{C}^-$. Moreover, one of them has to be unbounded. If $\mathcal{C}^+$ is unbounded, 
we just set $ \mathcal{C}_0 = \mathcal{C}^+\cup \{(\la_1(\Om),0)\}$. Otherwise, we take $\mathcal{C}_0 = \{(\la,-u): 
\ (\la,u)\in \mathcal{C}^-\}\cup \{(\la_1(\Om),0)\}$. In either case, $\mathcal{C}_0$ has the desired properties. This concludes the proof.
\end{proof}

\bigskip

Now we can proceed to the proof of Theorem \ref{th-bvp}.

\medskip

\begin{proof}[Proof of Theorem \ref{th-bvp}]
We begin by showing the nonexistence of positive solutions of \eqref{bvp} when 
$\la$ is large. This easily follows by noticing that, if $u$ is a positive solution of 
\eqref{bvp}, then
$$
(-\De)^s u \ge \la u \quad \hbox{in } \Om^+.
$$
It is well known, since $u>0$ in $\Om^+$ and $u\ge 0$ in $\R^N$, that this implies 
$$
\la <\la_1(\Om^+).
$$
Thus we may define
$$
\La= \sup\{ \la\in \R:\ \hbox{there exists a positive solution of } \eqref{bvp}\}.
$$
By definition there are no positive solutions of \eqref{bvp} when $\la \ge \La$. 

Next let us show that there exists a positive solution of \eqref{bvp} for every $\la<\la_1(\Om)$. 
By Lemma \ref{lema-rabi}, there exists an unbounded continuum of positive solutions $\mathcal{C}_0$ of 
\eqref{bvp} bifurcating from $(\la_1(\Om),0)$. Let 
$$
\mu= \sup\{ \la \in \R: (\la,u)\in \mathcal{C}_0 \hbox{ for some }u\}.
$$
Since $\mathcal{C}_0$ bifurcates from $(\la_1(\Om),0)$, it is clear that $\mu\ge \la_1(\Om)$. Now, 
we claim that there exists a positive solution of 
\eqref{bvp} for every $\la<\la_1(\Om)$, which will conclude the proof of the theorem. It is here 
where our a priori bounds are handy. 

Indeed, assume that for some $\la_0<\la_1(\Om)$ problem 
\eqref{bvp} does not admit any such solution. 
Applying again Theorem \ref{th-apriori} we deduce the existence of $M_0$ such that 
every positive solution of \eqref{bvp} with $\la_0\le \la \le \mu$ verifies $\|u\|_{L^\infty(\Om)} 
\le M_0$. Since $\mathcal{C}_0$ is connected, it follows that
$$
\mathcal{C}_0 \subset [\la_0,\mu] \times B_{M_0}(0),
$$
which is impossible, since $\mathcal{C}_0$ is unbounded. This shows the claim.
\end{proof}

\bigskip

\noindent {\bf Acknowledgements.} 
	All authors were partially supported by Ministerio de Eco\-no\-m\'ia y Competitividad under grant MTM2014-52822-P (Spain). 
	B. B. was partially supported by a MEC-Juan de la Cierva postdoctoral fellowship number  FJCI-2014-20504 (Spain) and 
	Fondecyt Grant No. 1151180 and	Programa Basal, CMM. U. de Chile. 
	L. D. P. was partially supported by PICT2012 0153 from ANPCyT (Argentina).
	A. Q. was partially supported by Fondecyt Grant No. 1151180 Programa Basal, 
	CMM. U. de Chile and Millennium Nucleus Center for Analysis of PDE NC130017.

\bigskip

\end{document}